\pgfplotsset{width=200pt, compat=1.9}
\newtheorem{question}{Question}[section]
\newtheorem{theorem}[question]{Theorem}
\newtheorem{lemma}[question]{Lemma}
\newtheorem{proposition}[question]{Proposition}
\newcommand{\ignore}[1]{}
\title{\vspace*{-.5in}Maximum distances in the four-digit {K}aprekar process}
\author{Pat Devlin\footnote{Yale University, New Haven CT, USA \qquad \texttt{patrick.devlin@yale.edu}} \and Tony Zeng\footnote{Yale University, New Haven CT, USA \qquad \texttt{tony.zeng@yale.edu}}}
\date{October 21, 2020}
\begin{document}

\maketitle
\renewcommand{\thefootnote}{\fnsymbol{footnote}}
\footnotetext{AMS 2020 subject classification: 05A15, 00A08, 11B37, 11B75}
\footnotetext{Key words and phrases:  Kaprekar routine, Kaprekar function, recreational mathematics, base-dependent integer problems}

\setcounter{footnote}{0}
\renewcommand{\thefootnote}{\arabic{footnote}}

\begin{abstract}
For natural numbers $x$ and $b$, the classical Kaprekar function is defined as $K_{b} (x) = D-A$, where $D$ is the rearrangement of the base-$b$ digits of $x$ in descending order and $A$ is ascending.  The bases $b$ for which $K_b$ has a $4$-digit non-zero fixed point were classified by Hasse and Prichett, and for each base this fixed point is known to be unique.  In this article, we determine the maximum number of iterations required to reach this fixed point among all four-digit base-$b$ inputs, thus answering a question of Yamagami.  Moreover, we also explore---as a function of $b$---the fraction of four-digit inputs for which iterating $K_b$ converges to this fixed point.
\end{abstract}

\section{Introduction}
In 1949, Dattatreya Ramchandra Kaprekar \cite{kaprekar1949another} introduced the following process.  We start with a base-$b$ four-digit number $x$ (allowing this to contain leading zeros such as $x=0309$).  Rearrange the digits of $x$ to be in decreasing order and subtract from this the rearrangement of the digits written in increasing order (these operations being done on base-$b$ integer representations).  This yields another $4$-digit base-$b$ number, and we denote this output as $K_b (x)$.  For instance, in base $10$, we would have
\begin{eqnarray*}
K_{10} (3223) &=& 3322 - 2233 = 889\\
K_{10} (0889) &=& 9880 - 0889 = 8991\\
K_{10} (8991) &=& 9981 - 1899 = 8082\\
K_{10} (8082) &=& 8820 - 0288 = 8532\\
K_{10} (8532) &=& 8532 - 2358 = 6174\\
K_{10} (6174) &=& 7641 - 1467 = 6174.
\end{eqnarray*}
From the above, we see 6174 is a fixed point of $K_{10}$.  Exploring this further, Kaprekar discovered that in fact, if we take any $4$-digit base-$10$ integer not divisible by $1111$, then iterating $K_{10}$ starting at that input will necessarily reach 6174 within at most $7$ steps.  (Multiples of $1111$ are sent to $0$, which is a trivial fixed point.)  This result was subsequently popularized by Martin Gardner, who featured it in his March 1975 column of \textit{Mathematical Games} \cite{sciAm} published in Scientific American.\footnote{We also recommend the first puzzle presented in this column, concerning a worm traversing an ever-stretching rubber rope.}

Although the study of this map began as recreational exploration, it has since been shown that the $2$-digit version is closely related to Mersenne primes \cite{yamagami20182}, and quite recently this procedure has found independent applications to cryptographic encoding schemes \cite{nandan2020multi}.  Several variations of the Kaprekar process have been studied as well \cite{young1993variation, young1995switch, chaillekaprekar}, and we point to \cite{yamagami20182} and \cite{peterson2008kaprekar} for convenient overviews of the literature.

Many authors seeking to understand the behavior of this problem turned to searching for fixed points in various bases \cite{trigg1972kaprekar, lapenta1979algorithm, walden2005searching}, and a lot has been published on the subject.  To this end, Ludington \cite{ludington1979bound} studied the behavior of $K_b$ with $b$ remaining fixed and a variable numer of digits, and she proved that for each fixed $b$, there are only finitely many $r$ for which almost every $r$-digit base-$b$ number is sent to the same fixed point.  Similarly, the base-10 fixed points have been carefully studied in \cite{prichett1981determination, dolan2011classification}.  Yamagami and Matsui \cite{yamagami2019some} recently explored this for other bases as well, proving a lower bound for the number of base-$b$ fixed points in terms of the number of non-trivial divisors of $b$.

A second approach has been to fix the number of digits and analyze the corresponding map for different bases.  Among the literature most relevant for us, Hasse and Prichett \cite{hasse1978determination} studied the four-digit version of this map to analyze for which bases there exists a fixed point of $K_b$, and they also characterized the bases for which almost every starting point converges to such a fixed point.  Prichett \cite{prichett1978terminating} subsequently obtained similar results for $5$-digit numbers.

For $3$-digit numbers, Eldridge and Sagong \cite{eldridge1988determination} proved that a base-$b$ fixed point exists iff $b$ is even.  In this case, they showed that every input not divisible by $111$ is eventually mapped to this fixed point.  For $b$ odd, they proved that repeated iteration of $K$ for nearly all $x$ results in a loop of period 2.  Their analysis also determines the maximum number of iterations needed to reach this 3-digit fixed point.

Finally, Yamagami \cite{yamagami20182} discussed the behavior of $K_b$ for a range of examples but with particular focus on the $2$-digit case.  Yamagami also asked the question: \textit{in the four-digit case, what is the furthest finite distance a point is away from a fixed point?}

In other words, let $S_b$ denote the set of $4$-digit base-$b$ inputs for which $K^{t}_{b} (x)$ is eventually a non-zero constant (as $t$ increases).  We then define $M_b$ to be the least $m$ for which $K^{m}_b (x) = K^{m+1}_b (x)$ for all $x \in S_b$.

\begin{figure}[h]
	\begin{center}
		\includegraphics[height=1.5in]{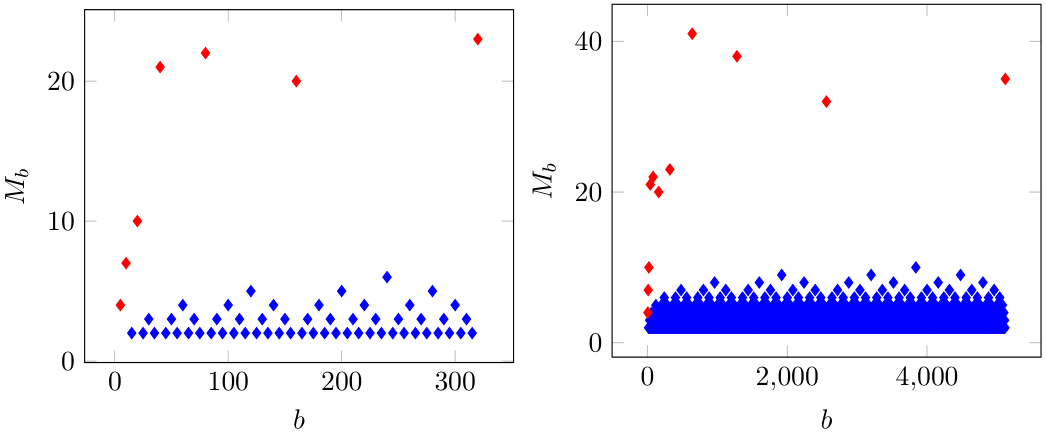}
		\vspace*{12pt}
		\includegraphics[height=1.5in]{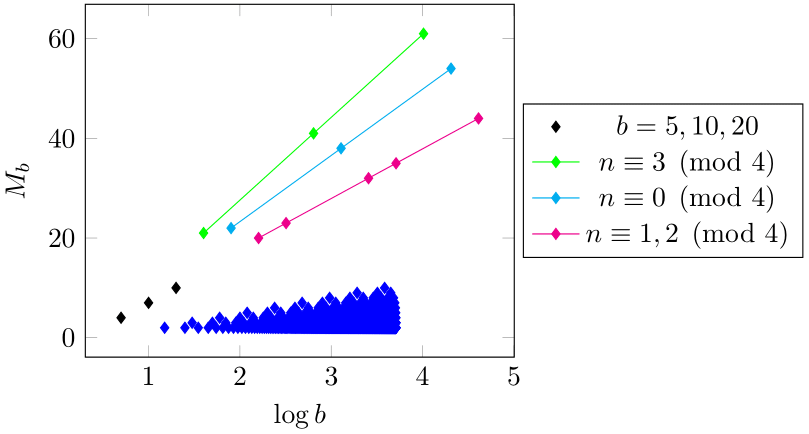}
	\caption{Plots of (computed) values of $M_b$ shown at different scales.} \label{figure:mb}
	\end{center}
\end{figure}

Previous literature has studied $M_b$ in the case that inputs have either two \cite{yamagami20182} or three \cite{eldridge1988determination} digits, and our first main result is a resolution of Yamagami's question for $4$ digits.  In this case, a result of \cite{hasse1978determination} shows that a base-$b$ non-trivial 4-digit fixed point exists iff $b \in \{2,4\}$ or $b$ is a multiple of $5$.  To determine $M_b$, we therefore need only consider these bases, which we accomplish as follows.

\begin{theorem}\label{theorem:iteration length}
For each $b \in \{2,4\} \cup \{5, 10, 15, 20, \ldots\}$, let $M_b$ be the largest finite distance that a $4$-digit base-$b$ number is from a fixed point of the Kaprekar function.  Then we have
\begin{itemize}
\item[(i)] $M_2 = 1, M_4 = 3, M_{5} = 4$, $M_{10} = 7$, and $M_{20} = 10$.
\item[(ii)] If $b = 5m \cdot 2^n$ for some odd number $m > 1$, then $M_b = n+2$.
\item[(iii)] Finally, if $b = 5 \cdot 2^n$ and $n \geq 3$, then we have
\[ M_b = \begin{cases}
		4n + 6 & \quad n \equiv 0 \pmod 4 \\
		3n + 5 & \quad n \equiv 1 \pmod 4 \\
		3n + 5 & \quad n \equiv 2 \pmod 4 \\
		5n + 6 & \quad n \equiv 3 \pmod 4
\end{cases}
\]
\end{itemize}
\end{theorem}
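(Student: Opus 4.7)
If the base-$b$ digits of $x$ sorted descending are $d_1 \geq d_2 \geq d_3 \geq d_4$, set $a := d_1 - d_4$ and $c := d_2 - d_3$. A direct expansion gives
\[
K_b(x) \;=\; (b-1)\bigl(a(b^2+b+1) + cb\bigr) \;=\; ab^3 + cb^2 - cb - a,
\]
and (for $c \geq 1$, with an analogous formula when $c = 0$) re-expressing this in base $b$ shows that the unsorted digits of $K_b(x)$ are $\{a,\, c-1,\, b-1-c,\, b-a\}$. Iterating $K_b$ is therefore equivalent to iterating a piecewise-linear map $\phi : T_b \to T_b$ on the triangle $T_b := \{(a,c) \in \mathbb{Z}^2 : 0 \leq c \leq a \leq b-1\}$, whose pieces are cut out by the sort order of those four quantities. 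Part (i) can then be disposed of by computing $\phi$ in full on the small triangles $T_2, T_4, T_5, T_{10}, T_{20}$ and reading off $M_b$ directly.

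\textbf{Lower bounds.} For parts (ii) and (iii) the plan is to exhibit explicit one-parameter families of starting points $(a_0, c_0) \in T_b$ whose $\phi$-orbits realize the claimed distances. In both cases the factorization $b = 5m \cdot 2^n$ enters through the interplay between the Hasse--Prichett fixed point (which lies in a specific region of $T_b$ determined by $b \bmod 5$) and the number of ``$2$-adic halving'' steps an orbit can perform before being forced into that region. For (iii), where $b = 5 \cdot 2^n$, these halvings are additionally coupled to the period-$4$ cycle of powers of $2$ modulo $5$, namely $2, 4, 3, 1, 2, \ldots$; this is the natural source of the $n \bmod 4$ dependence in the four subcases, and the extremal starting points should be chosen so that the orbit threads through one particular cyclic sequence of piecewise regions before collapsing.

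\textbf{Upper bounds and the main obstacle.} The bulk of the work is the matching upper bound. The strategy is to define a potential function $\Phi$ on $T_b$---most naturally a weighted combination involving $2$-adic valuations of simple linear expressions in $a$, $c$, and $b$---and show by case analysis over the linear pieces of $\phi$ that $\Phi$ decreases by at least $1$ every constant number of iterations, with the constant depending on which piece is active. Summing these region-by-region contributions is what should produce the bounds $n+2$, $3n+5$, $4n+6$, and $5n+6$. I expect the main obstacle to be pinning down the \emph{exact} coefficients in part (iii): a naive potential gives an upper bound of the form $Cn + O(1)$ with a suboptimal constant $C$, and sharpening to the stated $3, 4, 5$ requires identifying, for each residue $n \bmod 4$, the precise sequence of piecewise regions traversed by the worst orbit and ruling out any strictly longer alternative. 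I anticipate that this bookkeeping ultimately reduces to a short but delicate list of inequalities in $(a,c)$ combined with an induction on iteration count, mirroring and verifying the lower-bound constructions in each residue class.
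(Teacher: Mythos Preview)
Your reduction to the two parameters $(a,c)$ is exactly the paper's difference-pair framework, and your handling of part~(i) and the lower-bound constructions is in the same spirit. The substantive divergence is in the upper bound for part~(iii).

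The paper does \emph{not} use a forward potential function. Instead it runs the analysis \emph{backward}: starting from the finitely many ``special'' pairs $2^n(p,q)$ with $0\le q\le p<5$, it classifies all possible predecessor chains. A structural lemma shows that any chain of $t$ type-(a) predecessors of $(2^t c,2^t d)$ has coordinates of the form $bi/2^t\pm c$, $bj/2^t\pm d$ with $i,j$ odd; two further lemmas then bound how long a chain can survive once it is forced through a type-(b) or type-(c) step (by parity and mod-$5$ obstructions). Combining these gives, for each $(p,q)$, a sharp bound on the number $L$ of iterations before any orbit first hits $2^n(p,q)$; adding the already-tabulated distance from $2^n(p,q)$ to the fixed point yields the exact constants $3,4,5$ in front of $n$.

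Your potential-function plan, as you yourself note, only gives $Cn+O(1)$ with a suboptimal $C$, and you defer the sharpening to unspecified ``bookkeeping'' and ``a short but delicate list of inequalities.'' That is precisely where the content lies, and it is not clear a single monotone potential can deliver the sharp coefficients: the paper's argument shows that the worst orbits pass through qualitatively different type sequences (type~(a) runs punctuated by a single type~(b) or type~(c) step), and the exact length depends on which $(p,q)$ is hit and on mod-$5$ constraints that a $2$-adic potential alone does not see. So as written, your upper-bound argument has a genuine gap: the mechanism that pins down the exact constants is missing, and the paper's backward predecessor analysis is what actually supplies it.
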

The above result (and our proof) splits into several cases, the need for which is made evident when examining a plot of $M_b$, as in Figure \ref{figure:mb}.  From this, we see most values of $b$ exhibit a fractal-like pattern with several interspersed much larger values (namely those where $b = 5 \cdot 2^n$).  Some version of (ii) [governing the fractal-like behavior] can be inferred from work present in \cite{hasse1978determination}, and we include a proof of this for completeness.

The results of \cite{hasse1978determination} on the $4$-digit Kaprekar function prove that for each base divisible by $5$, there is exactly one non-trivial fixed point, the digits of which are $(3b/5)(b/5 -1)(4b/5 -1)(2b/5)$.  Since this non-trivial fixed point is unique, it was a notable matter of interest to ask when (as in the base-$10$ case) every input not divisible by $1111$ eventually reaches this fixed point.  As before, let $S_b$ denote the set of $4$-digit inputs which eventually reach this fixed point.  The main result of \cite{hasse1978determination} was that $|S_b| = b^4 - b$ iff $b = 5 \cdot 2^{n}$ for $n=0$ or $n$ odd (the $-b$ term accounting for the $b$ multiples of $1111$).  Our next result generalizes this to study $C_b = |S_b|/b^4$---the fraction of all input which eventually reach the non-trivial fixed point---for other bases $b$.

\begin{theorem}\label{theorem:proportion}
If $b = 5m \cdot 2^n$, for some odd $m > 1$, then $\displaystyle C_b  = \dfrac{|S_b|}{b^4} = \dfrac{8 + 40 \cdot 4^n}{5 \cdot b^2} = \dfrac{8}{5 \cdot b^2} + \dfrac{8}{25 m^2}.$
\end{theorem}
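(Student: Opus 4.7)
The plan is to analyze the induced map $\phi$ that $K_b$ defines on the pair space $P_b = \{(a,c) : 0 \leq c \leq a \leq b-1\}$, where each 4-digit input is identified with the pair $(a,c) = (d_1 - d_4,\, d_2 - d_3)$ of sorted-digit differences. Since $K_b(x) = a(b^3-1) + c\,b(b-1)$ depends only on $(a,c)$, orbits of $K_b$ collapse to orbits of $\phi$ on $P_b$; the output multisets are $\{a, c-1, b-c-1, b-a\}$ if $c \geq 1$, and $\{a-1, b-1, b-1, b-a\}$ if $c = 0$. A short case count gives the number of 4-tuple inputs with sorted differences $(a,c)$ as $N(a,c) = 24(b-a)(a-c)$ for $0 < c < a$, with easy boundary formulas otherwise, so the task becomes computing $|S_b| = \sum_{(a,c) \in B} N(a,c)$, where $B$ is the $\phi$-basin of the unique nontrivial fixed point $(a^*, c^*) = (3b/5,\, b/5)$.

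The main step, and what I expect to be the main obstacle, is to show that for $b = 5m \cdot 2^n$ with $m > 1$ odd, $B$ is contained in the $m$-multiple sublattice of pairs of the form $(m\alpha, m\beta)$. Using $b \equiv 0 \pmod m$, the residues modulo $m$ of the output digits (in the $c \geq 1$ case) are $\{\overline{a},\, -\overline{a},\, \overline{c}-1,\, -\overline{c}-1\}$, and demanding that the sorted output satisfy $d_1 \equiv d_4$ and $d_2 \equiv d_3 \pmod m$ forces this residue multiset to consist of two equal pairs. For $m$ odd with $m \geq 3$, an elementary case analysis then forces $\overline{a} = \overline{c} = 0$; a separate direct check handles the $c = 0$ output formula, where the sort in fact never produces an $m$-multiple pair. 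Consequently, every orbit eventually hitting the fixed point lies entirely within the $m$-multiples.

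On this sublattice, $\phi$ descends to a reduced map $\widetilde\phi(\alpha, \beta) = (2r_>,\, 2r_<)$, where $r_>,\, r_<$ denote the sorted values of $|\alpha - b_0/2|$ and $|\beta - b_0/2|$ with $b_0 = 5 \cdot 2^n$, defined when the two $r$-values are nonzero and unequal (otherwise $\phi$'s image exits the $m$-multiples). The fixed point is $(3 \cdot 2^n,\, 2^n)$, and I would enumerate its basin $\widetilde B$ by backward iteration: each valid pair admits exactly four $\widetilde\phi$-preimages, parametrized by the sign choices for $\alpha_0 - b_0/2$ and $\beta_0 - b_0/2$ together with the assignment of which coordinate carries $r_>$ (restricted by $\alpha_0 \geq \beta_0$), and a preimage exists only when both target coordinates are even, since the image of $\widetilde\phi$ always has both coordinates of the form $2r$. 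An induction on $n$ that tracks $2$-adic valuations gives $|\widetilde B| = 4^{n+1}$ and
\[
\Sigma_n := \sum_{(\alpha,\beta) \in \widetilde B} (b_0 - \alpha)(\alpha - \beta) = \frac{5 \cdot 4^n + 25 \cdot 16^n}{3}.
\]
Since $N(m\alpha, m\beta) = 24 m^2 (b_0 - \alpha)(\alpha - \beta)$ for $0 < \beta < \alpha < b_0$, this yields $|S_b| = 24 m^2 \Sigma_n = 8 b^2 (1 + 5 \cdot 4^n)/5$, equivalent to the claimed $C_b = 8/(5 b^2) + 8/(25 m^2)$.
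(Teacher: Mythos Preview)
Your approach is correct and is essentially the paper's proof: both identify the basin of the fixed pair as $4^{n+1}$ type-(a) difference pairs and compute $|S_b|$ by an induction on $n$ exploiting the four-preimage structure of the type-(a) map. The only cosmetic difference is that you factor out the $m$-sublattice reduction as a standalone preliminary step (showing the sublattice is backward-closed under the pair map), whereas the paper carries $m$ through the induction and inherits the divisibility-by-$m$ of all predecessors from its proof of the companion result on $M_b$.
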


\begin{figure}[ht]
	\begin{center}
		\includegraphics[height=1.5in]{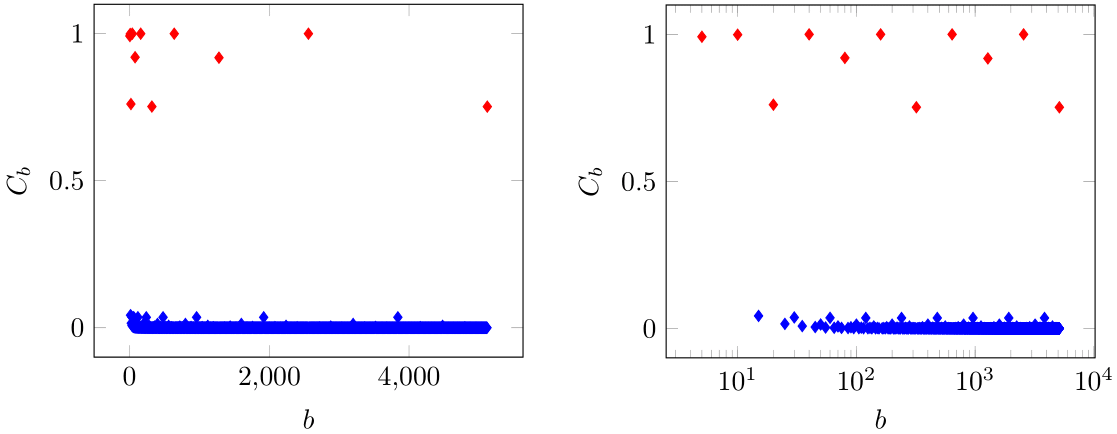}
		\includegraphics[height=1.5in]{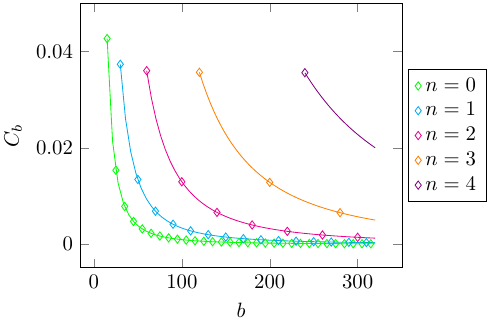}
	\end{center}
	\caption{Plots of $C_b$. Points colored according to values of $n$.}\label{figure:cb}
\end{figure}
In terms of Figure \ref{figure:cb}, the above theorem describes the behavior of all the points where $C_b$ is relatively small (i.e., those points clustered near $C_b =0$ in the first two graphs).  The last graph in the figure zooms in on this region and overlays our proven formula for these values.  As discussed, \cite{hasse1978determination} provides a formula for $C_b$ when $b = 5 \cdot 2^{n}$ and $n$ odd.  For even $n$, the middle graph in Figure \ref{figure:cb} seems to suggest there is some fractal-like self-similarity.  Numerically, it seems like these values of $C_b$ converge, and a somewhat bold conjecture might be that $C_b \to 3/4$ for $b = 5\cdot 2^{4n}$ and that $C_b \to 15/16$ for $b = 5\cdot 2^{4n+2}$.  We are unsure if either of these are true (or how a proof might go), but we believe a resolution either way would be interesting.

\subsection*{Outline of the paper}
We begin in Section \ref{section:difference pairs} by introducing a useful transformation originally due to \cite{hasse1978determination} that we will be using throughout.  In Section \ref{section:irregular bases}, we study bases of the form $b =5m \cdot 2^n$ with $m > 1$ odd.  In this same section, we prove Theorem \ref{theorem:iteration length}(ii) as well as Theorem \ref{theorem:proportion}.  In Section \ref{section:regular bases}, we address bases of the form $5 \cdot 2^n$ and prove Theorem \ref{theorem:iteration length}(iii).  We conclude in Section \ref{section:conclusion} with a discussion of open problems.

\subsection*{Acknowledgments}
This work was done as a part of the Summer Undergraduate Math Research at Yale (SUMRY) program.

The second author is grateful for funding from Yale's First-year Summer Research Fellowship.  He would also like to thank Vinton Geistfeld for inspiring his interest in this problem many years ago. 

\section{Difference pairs}\label{section:difference pairs}
We begin with the following transformation, which is very useful in studying the $4$-digit Kaprekar function.

If the four base-$b$ digits of $x$ are $a_3 \geq a_2 \geq a_1 \geq a_0$, then its (base-$b$) \emph{\textbf{difference pair}} is given by $(a_3 - a_0, a_2 - a_1)$. For a given difference pair $X = (d, d')$, we will write $cX = c(d, d')$ to mean $(cd, cd')$.  With this notation, Hasse and Prichett proved that the fixed point when $5 | b$ has difference pair $(3b/5,\ b/5)$.

There is an important distinction to be made between a fixed point of $K$ and a fixed pair. It is possible for a number to have the same difference pair as a fixed point of $K$ and not be a fixed point itself. (e.g. 6174 and 8532 have the same difference pair).  That said, Hasse and Prichett \cite{hasse1978determination} made the observation that $K(x)$ is completely determined by the difference pair of $x$, allowing us to extend the definition of Kaprekar's function to difference pairs.  But first, we classify difference pairs into three types:

\medskip

\noindent \textbf{Definition:} \emph{Let $(d, d')$ be a difference pair in base $b$.\\
	\indent If $d > d'$ and $d + d' \neq b$, we say that $(d, d')$ is \textbf{type (a)}. \\
	\indent If $d = d'$ or $d + d' = b$, we say that $(d, d')$ is \textbf{type (b)}. \\
	\indent If $d > d'$ and $d' = 0$, we say that $(d, d')$ is \textbf{type (c)}.}

\medskip

\noindent With this, we can define $K$ for difference pairs as follows.  $K((d, d')) = (d_1, d_1')$, where
	\[ \{d_1, d_1 ' \} = \begin{cases}
		\big\{ \vert 2d - b\vert, \vert 2d' - b\vert \big\} & \quad (d,d') \text{ is type (a)} \\
		\big\{ \vert 2d - (b - 1)\vert, \vert 2d - (b + 1)\vert\big\} & \quad (d,d') \text{ is type (b)} \\
		\big\{d - 1, b - d\big\} & \quad (d,d') \text{ is type (c)} \\
		0 & \quad d = d' = 0
	\end{cases} \]

Importantly, under this definition, if $x$ has difference pair $(d, d')$ and $y = K(x)$ has difference pair $(d_1, d_1')$. Then $K((d_0, d_1)) = (d_0', d_1')$.  Moreover, if $x$ and $z$ have the same difference pair, then $K(x)$ and $K(z)$ do as well [in fact, we'd also have $K(x) = K(z)$ as integers].

It turns out to be quite useful to be able to list out all possible predecessors of a particular difference pair. Hasse and Prichett \cite{hasse1978determination} provided a table describing just that, and below we provide two tables.

\vspace{5pt}

\begin{center}
	\begin{tabular}{||c|c|c|c|c||}
		\hline
		Type & Predecessor Type & \multicolumn{2}{c|}{Predecessors} & Conditions \\
		\hline 
		\multirow{5}{*}{(a)} & \multirow{2}{*}{(a)} & \multicolumn{1}{c}{$(\frac{b + d}{2}, \frac{b + d'}{2})$} & \multicolumn{1}{c|}{$(\frac{b + d}{2}, \frac{b - d'}{2})$} & $d\equiv d' \equiv b \pmod 2$ \\
		& & \multicolumn{1}{c}{$(\frac{b - d'}{2}, \frac{b - d}{2})$} & \multicolumn{1}{c|}{$(\frac{b + d'}{2}, \frac{b - d}{2})$} & \\
		\cline{2-5}
		& \multirow{2}{*}{(b)} & \multicolumn{1}{c}{$(\frac{b - 1 + d}{2}, \frac{b - 1 + d}{2})$} & \multicolumn{1}{c|}{$(\frac{b - 1 + d}{2}, \frac{b + 1 - d}{2})$} & $d\equiv d' \equiv b + 1 \pmod 2$ \\
		& & \multicolumn{1}{c}{$(\frac{b + 1 - d}{2}, \frac{b + 1 - d}{2})$} & \multicolumn{1}{c|}{} & $d = d' + 2$ \\
		\cline{2-5}
		& (c) & \multicolumn{1}{c}{$(d + 1, 0)$} & \multicolumn{1}{c|}{$(d' + 1, 0)$} & $d + d' = b - 1$ \\
		\hline 
		\multirow{6}{*}{(b)} & \multirow{2}{*}{(a)} & \multicolumn{1}{c}{$(\frac{b + d}{2}, \frac{b + d'}{2})$} & \multicolumn{1}{c|}{$(\frac{b + d}{2}, \frac{b - d'}{2})$} & $d + d' = b,\ \  d \neq d'$ \\
		& & \multicolumn{1}{c}{$(\frac{b - d'}{2}, \frac{b - d}{2})$} & \multicolumn{1}{c|}{$(\frac{b + d'}{2}, \frac{b - d}{2})$} & $d \equiv d' \equiv 0 \pmod 2$ \\
		\cline{2-5}
		& \multirow{2}{*}{(b)} & \multicolumn{1}{c}{$(\frac{3b}{4}, \frac{3b}{4})$} & \multicolumn{1}{c|}{$(\frac{3b}{4}, \frac{b}{4})$} & $d = d' + 2,\ \  b \equiv 0 \pmod 4$ \\
		& & \multicolumn{1}{c}{$(\frac{b}{4}, \frac{b}{4})$} & & \\
		\cline{3-5}
		& & \multicolumn{1}{c}{$(\frac{b}{2}, \frac{b}{2})$} & \multicolumn{1}{c|}{} & $d=d' = 1,\ \ b \equiv 0 \pmod 2$ \\
		\cline{2-5} 
		& (c) & \multicolumn{1}{c}{$(\frac{b + 2}{2}, 0)$} & \multicolumn{1}{c|}{} & $d = d' = \frac{b + 1}{2},\ \ b \equiv 1 \pmod 2$ \\
		\hline 
		\multirow{4}{*}{(c)} & (a) & \multicolumn{1}{c}{$(\frac{b + d}{2}, \frac{b}{2})$} & \multicolumn{1}{c|}{$(\frac{b}{2}, \frac{b - d}{2})$} & $b \equiv d \equiv 0 \pmod 2$ \\
		\cline{2-5}
		& \multirow{2}{*}{(b)} & \multicolumn{1}{c}{$(\frac{b + 1}{2}, \frac{b + 1}{2})$} & \multicolumn{1}{c|}{$(\frac{b + 1}{2}, \frac{b - 1}{2})$} & $d = 2,\ \  b \equiv 1 \pmod 2$ \\
		& & \multicolumn{1}{c}{$(\frac{b - 1}{2}, \frac{b - 1}{2})$} & & \\
		\cline{2-5}
		& (c) & \multicolumn{1}{c}{$(1, 0)$} & & $d = b - 1$ \\
		\hline 
	\end{tabular}
\end{center}

Alternatively, we can condense the contents of the table even further as follows:

\begin{framed}
\noindent Let $4 < b$ be any base divisible by four.  The comprehensive list of what precedes each difference pair is:
\begin{itemize}
\item $(0,0) \leftarrow (0,0)$ and $(1,1) \leftarrow (b/2, b/2)$ and also $(b-1,0) \leftarrow (1,0)$
\item[(i)] for $i \neq j$, we have $(2i, 2j) \leftarrow \left(\frac{b}{2} \pm i, \frac{b}{2} \pm j \right)$
\item[(ii)] for all $k$, we have $(2k+1, 2k-1) \leftarrow \left(\frac{b}{2} \pm k, \frac{b}{2} \pm k \right)$
\item[(iii)] For $k \notin \{0, b-1\}$, there are two predecessors of $(k, b-1-k)$.  Namely $(k, b-1-k) \leftarrow (k+1,0)$ and also $(k, b-1-k) \leftarrow (b-k,0)$
\item Pairs not listed above have no predecessors [in particular $(x,x)$ has predecessors iff $x \in \{0,1\}$]
\end{itemize}
\end{framed}

We now explore the contrast of applying $K$ to an integer as opposed to applying it to its difference pair.
\begin{lemma}\label{lemma:fixed point pair}
	For any $b \equiv 0 \pmod 5$, if $x$ has difference pair $(\frac{3b}{5}, \frac{b}{5})$, then $K(x)$ is the fixed point in base $b$. Furthermore, if $(d, d')$ is a first generation predecessor of the fixed pair, and $y$ has difference pair $(d, d')$, then $K(y) \neq K(x)$. 
\end{lemma}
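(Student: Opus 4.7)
The lemma splits into two sentences, and I would handle them separately. For the first sentence I appeal directly to what Section \ref{section:difference pairs} has already established: $K(x)$ as an integer is completely determined by the difference pair of $x$, and Hasse--Prichett showed that the unique non-trivial fixed point $p$ in base $b$ has difference pair $(3b/5, b/5)$. So if $x$ shares this difference pair, then $K(x) = K(p) = p$, and we are done in one line.

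For the second sentence, I interpret ``first generation predecessor of the fixed pair'' to mean a pair $(d, d') \neq (3b/5, b/5)$, since otherwise the conclusion would fail (the fixed pair is trivially a predecessor of itself). Rather than working case-by-case through the predecessor table, my plan is to compute $K(y)$ as an explicit integer-valued function of the difference pair alone. Writing $y$'s sorted digits as $a_3 \geq a_2 \geq a_1 \geq a_0$ with $a_3 - a_0 = d$ and $a_2 - a_1 = d'$, a direct expansion gives
\[
K(y) = (a_3 b^3 + a_2 b^2 + a_1 b + a_0) - (a_0 b^3 + a_1 b^2 + a_2 b + a_3) = d(b^3 - 1) + d'(b^2 - b),
\]
and this identity is valid uniformly, regardless of whether carries occur in the base-$b$ subtraction and regardless of whether $(d, d')$ is of type (a), (b), or (c).

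The entire claim now reduces to showing the map $(d, d') \mapsto d(b^3 - 1) + d'(b^2 - b)$ is injective on $\{0, 1, \ldots, b-1\}^2$; granting this, the predecessor $(d, d')$, being distinct from $(3b/5, b/5)$, must produce an output distinct from $p = K(x)$. Injectivity is the one step that needs any thought, but it is routine: factor the common $b - 1$ out of both terms to get $(b-1)\bigl[d(b^2+b+1) + d' b\bigr]$, then reduce modulo $b$ to recover $d$, which in turn forces $d'$. The only subtlety I would flag is the convention that ``first generation predecessor'' is taken strictly; once that is fixed, everything else is mechanical.
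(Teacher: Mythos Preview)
Your proof is correct, and it takes a genuinely different (and cleaner) route than the paper. For the first sentence the paper performs the explicit base-$b$ subtraction on a generic $x$ with difference pair $(3b/5,b/5)$ and exhibits the fixed-point digits $(3b/5)(b/5-1)(4b/5-1)(2b/5)$; your one-line appeal to the already-stated fact that $K(x)$ depends only on the difference pair is logically equivalent but more economical. For the second sentence the paper proceeds case-by-case, redoing the subtraction for each of the three non-trivial predecessors $(4b/5,3b/5)$, $(4b/5,2b/5)$, $(2b/5,b/5)$ and checking that the resulting integer is not the fixed point. Your argument replaces this with the uniform identity $K(y)=d(b^3-1)+d'(b^2-b)$ and an injectivity check on $\{0,\dots,b-1\}^2$, which proves strictly more (distinct difference pairs always yield distinct $K$-values) and avoids any casework. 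The paper's approach has the minor expository advantage of displaying the actual digit string of the fixed point; yours has the advantage of being shorter, type-independent, and reusable. Your reading of ``first generation predecessor'' as excluding the fixed pair itself matches the paper's intent.
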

\begin{proof}
	Suppose $x$ has difference pair $(\frac{3b}{5}, \frac{b}{5})$ and has the digits $\overline{ \left( \frac{3b}{5} + c \right) \left( \frac{b}{5} + d \right) \left(d\right) \left(c\right) }$.  Then $K(x)$ is 
	\begin{center}
		\begin{tabular}{ccccc}
			& $(\frac{3b}{5} + c)$ & $(\frac{b}{5} + d)$ & $(d)$ & $(c)$ \vspace*{.1in}\\
			$-$ & $(c)$ & $(d)$ & $(\frac{b}{5} + d)$ & $(\frac{3b}{5} + c)$ \vspace*{.1in}\\
			\hline
			\vspace*{.1in}
			& $(\frac{3b}{5})$ & $(\frac{b}{5})$ & $(\frac{-b}{5})$ & $(\frac{-3b}{5})$ \\
			$=$ & $(\frac{3b}{5})$ & $(\frac{b}{5} - 1)$ & $(\frac{4b}{5} - 1)$ & $(\frac{2b}{5})$
		\end{tabular}
	\end{center}
	The second statement is follows by a similar subtraction argument for each of the immediate difference-pair predecessors of the fixed point: $\left(\frac{4b}{5}, \frac{3b}{5}\right), \left(\frac{4b}{5}, \frac{2b}{5}\right),$ and $\left(\frac{2b}{5}, \frac{b}{5}\right)$.
\end{proof}

\section{Bases of the form $b = 5m \cdot 2^n$, for odd $m >1$}\label{section:irregular bases}
We first turn our attention to bases of the form $5m \cdot 2^n$, with $m > 1$ odd.  For such bases, the behavior of $K$ is relatively easy to understand.  This was already noted as early as \cite{hasse1978determination}, and in fact their argument---much like ours---was split into these exceptional bases and the comparatively more regular bases discussed in later sections.  We first provide a proof of Theorem \ref{theorem:iteration length}(ii), essentially following the treatment in \cite{hasse1978determination}.
\subsection{Distance to the fixed point, $M_b$}
\begin{proof}[Proof of Theorem \ref{theorem:iteration length}(ii)]
Letting $b = m \cdot 5 \cdot 2^n$, we proceed by induction on $n$.  In particular, we will prove the claim that (i) every difference-pair predecessor of the fixed point is of type (a); (ii) each coordinate of every difference-pair predecessor is divisible by $m$; and (iii) and $M_b = n+2$.

\paragraph*{Base case:} Suppose $n = 0$. In general, we know the fixed point has as immediate predecessors 
\[ \left(\frac{3b}{5}, \frac{b}{5}\right), \left(\frac{4b}{5}, \frac{3b}{5}\right), \left(\frac{4b}{5}, \frac{2b}{5}\right), \left(\frac{2b}{5}, \frac{b}{5}\right).
\]
Because $b = 5m$, these can be rewritten as $(3m, m), (4m, 3m), (4m, 2m), (2m, m).$
The first of these is the fixed point, so we need not consider it any further. As for the rest, $m$ is odd, so these all have at least one even component, which preclude them from having any type (a) predecessors [because $b$ is odd]. The difference between their components is either $m$ or $2m$, and since $m$ is odd and not equal to one, we never have $d = d' + 2$, which is a requirement to have type (b) predecessors. Finally, the sum of their components is either $3m$, $6m$, or $7m$, which cannot equal $b + 1 = 5m + 1$, so they have no type (c) predecessors either. Then by Lemma \ref{lemma:fixed point pair}, base $b$ has $M_b = 2$.

\paragraph*{Induction step:} Suppose the desired claim holds for some integer $n \geq 0$.  Let $(x, y)$ be a highest generation predecessor of $X$ (i.e., $(x, y)$ has no predecessors of its own).  In base $2b$, we know that the fixed point is $2X$ and has predecessors $2p$ for every predecessor $p$ of $X$ in base $b$.  Moreover, it also has the immediate predecessors of these pairs.  Namely, considering the predecessors of $(2x, 2y)$ [in base $2b$], we see these are 
	\[ (b + x, b + y), (b + x, b - y), (b - y, b - x), (b + y, b - x) \]
For each of these predecessors, we note that $d$ and $d'$ are both divisible by $m$, and they are not equal.  This means they must be of type (a) since their difference cannot be less than $2$ and their sum cannot be $b-1$ (since neither of these quantities are multiples of the odd number $m\geq 3$).  Moreover, if $(x,y)$ had no base-$b$ predecessors, then the four pairs listed above cannot have any base-$2b$ predecessors since we must have $x \equiv 1+y \pmod{2}$.  Thus, we see that $M_{2b} = 1 + M_{b}$, completing the proof.
\end{proof}

\subsection{Convergence rate}
Our proof of Theorem \ref{theorem:proportion} uses the notion of difference pairs, but the statement is about the number of inputs in $S_b$.  Thus, we need the following, which relates how many four-digit numbers have a given difference pair.

\begin{lemma}\label{lemma:number for each difference pair}
For a difference pair $(d, d')$ with $d \neq d'$ there are $N((d, d')) := 24(b- d)(d - d')$ four-digit base-$b$ numbers with difference pair $(d, d')$.
\end{lemma}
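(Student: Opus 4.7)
The plan is to reduce the count to an enumeration of sorted digit-tuples, each weighted by the number of distinct orderings of its underlying digit multiset. Every four-digit base-$b$ number corresponds to an ordered tuple $(y_3, y_2, y_1, y_0) \in \{0,\ldots,b-1\}^4$, and sorting into $a_3 \geq a_2 \geq a_1 \geq a_0$ determines its difference pair. So I would count sorted tuples satisfying $a_3 - a_0 = d$ and $a_2 - a_1 = d'$, then multiply each by the corresponding multinomial coefficient $4!/\prod m_i!$ accounting for orderings of equal digits.

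For the first step, writing $a_3 = a_0 + d$ and $a_2 = a_1 + d'$, the monotonicity chain $a_3 \geq a_2 \geq a_1 \geq a_0$ reduces to $a_0 \leq a_1 \leq a_0 + d - d'$, with $0 \leq a_0 \leq b - 1 - d$ ensuring all digits lie in $\{0,\ldots,b-1\}$. This gives $b - d$ choices of $a_0$ and, for each, $d - d' + 1$ choices of $a_1$.

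Next I would sort the resulting sorted tuples by their multiplicity pattern. Assuming $d > d' > 0$, the only possible coincidences are $a_3 = a_2$ (equivalently $a_1 = a_0 + d - d'$) and $a_1 = a_0$; coincidences like $a_3 = a_1$ or $a_2 = a_0$ are ruled out by $d > d' > 0$. Moreover these two possibilities cannot occur simultaneously (since $d > d'$ forbids $a_0 = a_0 + d - d'$). Hence for each fixed $a_0$ there are two ``boundary'' values of $a_1$ (each yielding a multiset with one repeated pair, contributing $4!/2! = 12$ orderings) and $d - d' - 1$ ``interior'' values (each with four distinct digits, contributing $24$ orderings). This totals $24(d - d' - 1) + 12 + 12 = 24(d - d')$ per choice of $a_0$, and multiplying by the $b - d$ choices for $a_0$ yields $24(b - d)(d - d')$.

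The main obstacle is the multiplicity bookkeeping: one must verify carefully that no other digit coincidences can arise under $d > d' > 0$, and check the corner case $d = d' + 1$ where the ``interior'' subcase is vacuous but the two boundary subcases alone already produce $12 + 12 = 24(d - d')$. The degenerate case $d' = 0$ forces the additional equality $a_2 = a_1$, introducing three- and four-fold digit repetitions that must be handled by a parallel but slightly more elaborate multiplicity analysis.
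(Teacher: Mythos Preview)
Your proposal is correct and follows essentially the same approach as the paper: parametrize sorted tuples by $(a_0, a_1)$, observe that for $d > d' > 0$ the only digit coincidences are $a_3 = a_2$ or $a_1 = a_0$ (never both), and weight the $d-d'+1$ choices of $a_1$ by permutation counts to obtain $24(d-d')$ per choice of $a_0$. The paper likewise restricts its argument to type~(a) pairs (hence $d' > 0$), so your remark that $d' = 0$ would require separate handling is consistent with the paper's actual scope.
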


\begin{proof}
	Suppose $x$, which has type (a) difference pair $(d, d')$, has digits $a_3 \geq a_2 \geq a_1 \geq a_0$. Notice that $\{a_3, a_0\}$ can take values from $\{d, 0\}, \cdots, \{b - 1, b - d - 1\}$, $b - d$ possibilities. Similarly, $\{a_2, a_1\}$ can take values from $\{a_3, a_3 - d'\}, \cdots, \{a_0 + d', a_0\}$, $d - d' + 1$ possibilities. Since $x$ is type (a), $d' \neq 0$ and $d' \neq d$. This means that the only situation where not all digits are distinct is when either $a_2 = a_3$ and $a_1 \neq a_0$ or $a_1 = a_0$ and $a_2 \neq a_3$. In both of these cases, $x$ has exactly one pair of duplicate digits. Then we can compute how many $x$ have type (a) difference pair $(d, d')$ to be 
	\[ 4!(b - d)(d - d' - 1) + \frac{4!}{2}(b - d)(2) = 24(b - d)(d - d' - 1) + 24(b - d) = 24(b - d)(d - d') \]
\end{proof}

Now we are equipped to prove Theorem \ref{theorem:proportion}.

\begin{proof}[Proof of Theorem \ref{theorem:proportion}]
We let $m > 1$ be fixed and proceed by induction on $n$.  In particular, letting $A_n = C_b b^4$ we will prove the claim that (i) $A_n = 40 \cdot 4^n \cdot m^2(1 + 5 \cdot 4^n)$; (ii) the fixed point has $4^{n+1}$ difference-pair predecessors (including itself) all of which are type (a); and (iii) these difference-pair predecessors can be partitioned into sets of the form $H = \{(x,y), (x, b-y), (y, b-x), (b-y, b-x)\}$.

\paragraph*{Base case ($n=0$):}	In general, we know that the fixed point has only four immediate difference-pair predecessors (including itself), all of type (a): 
\[
\left(\frac{3b}{5}, \frac{b}{5} \right), \left(\frac{4b}{5}, \frac{3b}{5}\right), \left(\frac{4b}{5}, \frac{2b}{5}\right), \left(\frac{2b}{5}, \frac{b}{5}\right).
\]
Moreover, since $m$ (and $b$) are odd, all of these are type (a), and none of these has any additional predecessors.

Finally, we have
\begin{eqnarray*}
N\left(\frac{4b}{5}, \frac{3b}{5} \right) &=& 24\left(b - \frac{4b}{5}\right)\left(\frac{4b}{5} - \frac{3b}{5}\right) = 24(2^n \cdot m)(2^n \cdot m)\\
N\left(\frac{4b}{5}, \frac{2b}{5} \right) &=& 24\left(b - \frac{4b}{5}\right)\left(\frac{4b}{5} - \frac{2b}{5}\right) = 24(2^n \cdot m)(2 \cdot 2^n \cdot m)\\
N\left(\frac{2b}{5}, \frac{b}{5} \right) &=& 24\left(b - \frac{2b}{5}\right)\left(\frac{2b}{5} - \frac{b}{5}\right) = 24(3 \cdot 2^n m)(2^n \cdot m)\\
N\left(\frac{3b}{5}, \frac{b}{5} \right) &=& 24\left(b - \frac{3b}{5}\right)\left(\frac{3b}{5} - \frac{b}{5}\right) = 24(2 \cdot 2^n \cdot m)(2 \cdot 2^n \cdot m),
\end{eqnarray*}
and summing gives us $A_0 = 240 m^2$ as desired.

\paragraph*{Induction step:} Suppose our claim holds for some $n\geq 0$.  First notice that
\[
N(x, y) = 24(b - x)(x - y), \quad N(x, b - y) = 24(b - x)(x + y- b)
\]
	\[ N(b - y, b - x) = 24(y)(x - y), \quad N(y, b - x) = 24(b - y)(x + y - b). \]
Summing these four together gives us 
\[
N(H) := \sum_{(p,q) \in H}N(p,q) = 24(4bx + 2by - 2b^2 - 2x^2 - 2y^2)
\]
For each such set $H$ in base $b$, there is a corresponding set $H'$ in base $2b$ given by 
	\[ H' = \{(2x, 2y), (2x, 2b - 2y), (2b - 2y, 2b - 2x), (2y, 2b - 2x)\}. \]
These sets $H'$ will be difference-pair predecessors of the fixed point in base $2b$.  Moreover, the immediate predecessors of every element of all sets $H'$ accounts for all predecessors of the fixed point in base $2b$.  This immediately gives us parts (ii) and (iii) of our desired induction claim, and we need only prove (i).  For this, notice that $A_{n + 1} = \sum_{H'} N(K^{-1}(H'))$ across all $H'$. For every element $h$ of $H'$, we compute $N(K^{-1}(h))$, keeping in mind that we are now working in base $2b$. 
	\begin{center}
		\begin{tabular}{||c|c|c|c||}
			\hline 
			$h$ & \multicolumn{2}{c|}{Predecessors} & $N(K^{-1}(h))$ \\
			\hline 
			\multirow{2}{*}{$(2x, 2y)$} & \multicolumn{1}{c}{$(b + x, b + y)$} & \multicolumn{1}{c|}{$(b + x, b - y)$} & \multirow{2}{*}{$24(4bx - 2x^2 - 2y^2)$} \\
			& \multicolumn{1}{c}{$(b - y, b - x)$} & \multicolumn{1}{c|}{$(b + y, b - x)$} & \\
			\hline 
			\multirow{2}{*}{$(2x, 2b - 2y)$} & \multicolumn{1}{c}{$(b + x, 2b - y)$} & \multicolumn{1}{c|}{$(b + x, y)$} & \multirow{2}{*}{$24(4bx + 4by - 2b^2 - 2x^2 - 2y^2)$} \\
			& \multicolumn{1}{c}{$(y, b - x)$} & \multicolumn{1}{c|}{$(2b - y, b - x)$} & \\
			\hline 
			\multirow{2}{*}{$(2b - 2y, 2b - 2x)$} & \multicolumn{1}{c}{$(2b - y, 2b - x)$} & \multicolumn{1}{c|}{$(2b - y, x)$} & \multirow{2}{*}{$24(4bx - 2x^2 - 2y^2)$} \\
			& \multicolumn{1}{c}{$(x, y)$} & \multicolumn{1}{c|}{$(2b - x, y)$} & \\
			\hline 
			\multirow{2}{*}{$(2y, 2b - 2x)$} & \multicolumn{1}{c}{$(b + y, 2b - x)$} & \multicolumn{1}{c|}{$(b + y, x)$} & \multirow{2}{*}{$24(4bx + 4by - 2b^2 - 2x^2 - 2y^2)$} \\
			& \multicolumn{1}{c}{$(x, b - y)$} & \multicolumn{1}{c|}{$(2b - x, b - y)$} & \\
			\hline 
		\end{tabular}
	\end{center}

	We sum to obtain $N(K^{-1}(H')) = 24(16bx + 8by - 4b^2 - 8x^2 - 8y^2)$.  Thus, for any given $H$ and its corresponding $H'$, we have $N(K^{-1}(H')) = 4N(H) + 24 \cdot 4b^2$.  This then gives us
\begin{eqnarray*}
A_{n + 1} &=& \sum_{H'} N(K^{-1}(H')) = \sum_{H} \left[ 4N(H) + 24 \cdot 4b^2 \right]\\
&=& 4\sum_{H} N(H) + 24 \sum_{H} 4b^2 = 4A_n + 24 \cdot 4^{n+1} b^2 = 40 \cdot 4^{n + 1} \cdot m^2 \left(1 + 5 \cdot 4^{n + 1} \right),
\end{eqnarray*}
which completes the proof.
\end{proof}

\section{Bases of the form $b = 5\cdot 2^n$}\label{section:regular bases}
For bases of the form $5 \cdot 2^n$, a key observation of \cite{hasse1978determination} was that repeated applying $K$ to difference pairs will necessarily eventually result in a difference pair where both coordinates are divisible by $2^n = b/5$.  After this, we need only consider trajectories of difference pairs of this type.  For this, see Tables \ref{table:digraph of special points} and \ref{table:special points to ending times} of the Appendix, both of which have versions appearing in \cite{hasse1978determination}.

Thus, to understand the behavior of $K$, we need only understand how many iterations of $K$ are required before reaching a difference pair where both coordinates are divisible by $2^n$.  This depends on which difference pair is ultimately reached, as discussed as follows.

\begin{proposition}\label{proposition:predecessor length}
Let $b = 5 \cdot 2^{n}$ with $n \geq 2$.  Suppose $(u,v)$ is any fixed difference pair, and let $L \geq 0$ be the least integer for which $K^L (u,v)$ is of the form $(p 2^{n}, q 2^{n})$, where $0 \leq q \leq p < 5$.  Then we have
\[
L \leq \begin{cases}
0 \qquad &\text{if $(p,q) = (3,1)$ or $p=q$}\\
n \qquad &\text{if $(p,q) \in \Big \{ (4,1), (3,0), (4,0) \Big \}$}\\
2n \qquad &\text{if $(p,q) \in \Big \{(4,2), (2,0) \Big\}$}\\
2n+2 \qquad &\text{if $(p,q) \in \Big \{(1,0), (2,1), (3,2), (4,3) \Big\}$}.
\end{cases}
\]
Moreover, if $n \geq 5$, then for each $(p,q)$ there is a pair $(u,v)$ for which the above upper bound on $L$ is attained.
\end{proposition}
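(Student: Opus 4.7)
My plan is to analyze, for each target $(p, q)$ with $0 \leq q \leq p < 5$, the backward tree of $X = (p 2^n, q 2^n)$ under $K$ in the digraph of difference pairs, pruned at any node that is itself of the special form $(p' 2^n, q' 2^n)$; the depth of this tree equals the maximum $L$ claimed. The main tool is the condensed predecessor list from the framed box in Section~\ref{section:difference pairs}, combined with a careful tracking of the $2$-adic valuations of the coordinates. For the easy cases I would handle $(p, q) = (3, 1)$ directly: the type-(a) backward rule with $i = 3 \cdot 2^{n-1}$ and $j = 2^{n-1}$ yields predecessor coordinates $(5 \pm 3)\,2^{n-1}$ and $(5 \pm 1)\,2^{n-1}$, and since $5 \pm 3$ and $5 \pm 1$ are all even, every such coordinate is a multiple of $2^n$ and every immediate predecessor is itself special, forcing $L = 0$. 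For $p = q$ the equal-coordinate pair has no predecessor by the last clause of the framed list (since the common coordinate $p 2^n \geq 4$ lies outside $\{0, 1\}$ for $n \geq 2$).

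For the remaining cases I would iterate backwards using the type-(a) rule, observing that for an even pair $(d, d')$ with valuations $v(d), v(d') < n$, each backward step decreases both valuations by exactly $1$, while coordinates with valuation $\geq n$ either remain $\geq n$ or drop to $n - 1$ depending on parity. This ``valuation budget'' is the essential quantity controlling the depth. Starting from $X$ and tracing backwards, after roughly $n$ purely type-(a) steps some coordinate reaches valuation $0$ (odd). At that point continuing backwards requires that the pair has the special structure $(2k + 1, 2k - 1)$ (to use the type-(b) backward rule, leading to an equal-coordinate pair and possibly through $(1, 1) \leftarrow (b/2, b/2)$) or $(k, b - 1 - k)$ (to use the type-(c) backward rule, leading to a pair of the form $(x, 0)$ which may itself have further preds). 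For each $(p, q)$ I would verify which of these transitions are available at the terminating pair: in $\{(4, 1), (3, 0), (4, 0)\}$ none are, giving the bound $n$; in $\{(4, 2), (2, 0)\}$ the type-(b) transition is available and extends the chain to $2n$; and in $\{(1, 0), (2, 1), (3, 2), (4, 3)\}$ additional type-(c) transitions through sum-$(b{-}1)$ pairs are available, contributing the extra levels that reach $2n + 2$.

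For the attainment statement when $n \geq 5$, I would reverse the extremal backward chain identified above, making explicit sign-choices in each predecessor formula so as to follow the deepest branch, thereby producing a concrete witness $(u, v)$. The main obstacle I anticipate is the bookkeeping required to certify that no alternate branch of the backward tree is deeper than the one followed: each backward step offers up to four alternative predecessors, and one must systematically rule out any that could exceed the claimed bound. This is especially delicate in the two long regimes, where the chain navigates through multiple rule-type transitions and where certain branches detour through sum-$b$ or sum-$(b{-}1)$ pairs whose own predecessor structure must be ruled out. The $n \geq 5$ restriction in the attainment claim reflects the fact that for small $n$ some of the intermediate pairs in the extremal chain coincide with already-visited nodes or become invalid (e.g., the pair $(2k{+}1, 2k{-}1)$ targeted at step $n$ collapses into a special pair when $n$ is too small), so the extremal construction only becomes simultaneously realizable once $n$ is large enough that all transitions are independent.
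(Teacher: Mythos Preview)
Your plan has the same architecture as the paper's proof: track $2$-adic valuations backward via the type-(a) rule for roughly $n$ steps until a coordinate becomes odd, then pass through a type-(b) or type-(c) predecessor, then iterate again. The paper packages the first phase as a lemma stating that if $K^t(u,v) = (2^t c, 2^t d)$ then the coordinates of $(u,v)$ have the form $bi/2^t \pm c$ and $bj/2^t \pm d$ with $i,j$ odd and at most $2^t$; this refines your ``valuation drops by one'' observation to an explicit residue description, and that refinement is what actually drives the case split.

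The gap in your proposal is the step ``I would verify which of these transitions are available.'' You assert the outcome but give no mechanism. The paper's verification is a mod-$5$ argument: writing the coordinates after $n$ backward steps as $5i \pm p$ and $5j \pm q$, the requirement $x_n - y_n = 2$ (for a type-(b) predecessor) forces $\pm p \pm q \equiv \pm 2 \pmod 5$, which fails for $(4,0)$; and the requirement $x_n + y_n = b-1$ (for a type-(c) predecessor) forces $\pm p \pm q \equiv -1 \pmod 5$, which fails for $(4,1)$ and $(3,0)$. Without this arithmetic you cannot separate the $L \le n$ cases from the longer ones. The same issue recurs in the second phase: the bounds $2n$ and $2n+2$ are not automatic consequences of ``another $n$ type-(a) steps.'' The paper proves two dedicated lemmas bounding the length of any chain terminating at a pair $(x,0)$ (by $n+1$) or at a pair $(x,b-x)$ (by $n-1$), each again invoking the explicit $5i \pm c$ form and a second mod-$5$ obstruction to rule out a further transition. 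Your sketch does not account for this second round of arithmetic, nor does it isolate the exceptional predecessors through $(1,1)$ and $(b-1,0)$, which the paper disposes of as a separate preliminary case before the main parity split.
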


We temporarily postpone a proof of the upper bound in the above proposition, but we show the equality case simply by explicitly constructing a pair $(u,v)$ for each pair $(p,q)$.  Note that we need not consider the cases $p=q$ or $(p,q) = (3,1)$.  This is summarized in Table \ref{table:starting points} of the Appendix, and each row is easily verified.

\subsection*{Derivation of Theorem \ref{theorem:iteration length}(iii) from Proposition \ref{proposition:predecessor length}}
Table \ref{table:digraph of special points} provides all the information required to know how many iterations are needed to reach the fixed point (or to loop) provided the initial input has a difference pair of the form $2^n (p,q)$.  Moreover, Proposition \ref{proposition:predecessor length} provides the exact bound many steps are possible before reaching any input whose difference sequence is of the form $2^n (p,q)$.

Together, we can combine these two pieces of information into Tables \ref{table:special points to ending times} and \ref{table:final table numbers} of the Appendix.  For each, we are determining exactly how many iterations of $K$ are needed until arriving at the four digit fixed point [not merely a point whose difference pair is $(3b/5, b/5)$].  Thus, in order to prove Theorem \ref{theorem:iteration length}(iii) for $n \geq 5$, we need only find the largest value in each given column of Table \ref{table:starting points}.  Bases $5 \cdot 2^n$ for $n \leq 4$ are each proven by an easy exhaustive computer search.

\subsection*{Lemmas to prove Proposition \ref{proposition:predecessor length}}
\begin{lemma}\label{lemma:type a predecessors}
	Suppose $4 < b$ is divisible by $2^n$ and $0 \leq t \leq n \geq 2$.  Further suppose that $K^{t} (u, v) = (2^t c, 2^t d)$.  Then one of the coordinates of $(u,v)$ is of the form $b i / 2^t \pm c$ and the other of the form $b j / 2^t \pm d$, where $i$ and $j$ are positive odd integers each at most $2^t$.
\end{lemma}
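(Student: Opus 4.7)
The plan is to prove the lemma by induction on $t$. The base case $t=0$ is the trivial identity $K^0(u,v) = (u,v) = (c,d)$ (the lemma is really used substantively only for $t \geq 1$, so one can view $t=0$ as the formal anchor of the induction). For $t=1$, I would observe that the target pair $(2c,2d)$ has both coordinates even, which immediately rules out being reached as a type-(b) image (whose coordinates look like $2k \pm 1$) or as a type-(c) image (whose coordinate sum equals the odd number $b-1$ since $b$ is even). Thus the condensed predecessor table of Section~\ref{section:difference pairs} applies only in its type-(a) branch, giving $(u,v) \in \{(b/2 \pm c,\ b/2 \pm d)\}$, which matches the claim with $i=j=1$.

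For the inductive step from $t$ to $t+1$, I would set $(u',v') := K(u,v)$, note that $K^t(u',v') = (2^t \cdot 2c,\ 2^t \cdot 2d)$, and apply the inductive hypothesis with $2c,2d$ in place of $c,d$ to obtain $u' = bi'/2^t \pm 2c$ and $v' = bj'/2^t \pm 2d$ for positive odd integers $i',j' \leq 2^t$. Factoring out a $2$ rewrites $(u',v') = (2\tilde c,\ 2\tilde d)$ with $\tilde c := bi'/2^{t+1} \pm c$ and $\tilde d := bj'/2^{t+1} \pm d$. Provided $\tilde c \neq \tilde d$, the same parity reasoning used in the base case forces $(u,v) \in \{(b/2 \pm \tilde c,\ b/2 \pm \tilde d)\}$, and expanding one coordinate gives
\[
u \;=\; \tfrac{b}{2} \pm \tilde c \;=\; \frac{b(2^t \pm i')}{2^{t+1}} \pm c.
\]
Setting $i'' := 2^t \pm i'$, I would note that $i''$ is odd (since $2^t$ is even for $t \geq 1$ and $i'$ is odd) and, because $i'$ is odd with $i' \leq 2^t - 1$, lies in $[1, 2^{t+1} - 1]$; so $i''$ is a positive odd integer at most $2^{t+1}$. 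The identical analysis applies to $v$, closing the induction.

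The main obstacle I anticipate is the degeneracy bookkeeping: I would need to check at each inductive step that the intermediate target $(2\tilde c, 2\tilde d)$ has both coordinates even and unequal so that only the type-(a) predecessor formula is in play, handling edge cases such as $\tilde c = \tilde d$ as the trivial trajectory through $(0,0)$. Once these reductions are in place, the algebraic content is essentially a single factoring of $2$ followed by collecting terms, and the parity of $2^t \pm i'$ — which is exactly what fails in any other situation — is what allows the index range to propagate cleanly from one step to the next.
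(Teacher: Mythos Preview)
Your proposal is correct and follows essentially the same approach as the paper: induction on $t$, using the parity of the target coordinates to force every predecessor step into the type-(a) branch of the condensed table, and then tracking how the odd index transforms. The only cosmetic difference is the direction in which you peel off the single $K$-step---the paper removes the \emph{last} application of $K$ (writing $(u',v')=K^{t-1}(u,v)$ and applying the hypothesis there, obtaining the new index as $2i\pm 1$), whereas you remove the \emph{first} one (writing $(u',v')=K(u,v)$ and applying the hypothesis to it, obtaining the new index as $2^{t}\pm i'$); the algebra and the parity/range check are the same either way.
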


\begin{proof}
	We'll prove this by induction on $t$.  If $t=0$, there is nothing to prove.  Suppose now that $t \geq 1$.  We know from the above table that $K(u', v') = (2^t c, 2^t d)$ implies one coordinate of $(u', v')$ is of the form $b/2 \pm 2^{t-1} c$ and the other of the form $b/2 \pm 2^{t-1} d$.  Since $t \leq n$, both of these are divisible by $2^{t-1}$, so we may apply the induction step to assert that since $K^{t-1} (u,v) = (u', v')$, the coordinates of $(u,v)$ must be of the form
	\begin{eqnarray*}
	\dfrac{b i}{2^{t-1}} \pm \left(\dfrac{b}{2^{t}} \pm c \right) &=& \dfrac{b (2i \pm 1)}{2^t} \pm c, \qquad \text{and}\\
	\dfrac{b j}{2^{t-1}} \pm \left(\dfrac{b}{2^{t}} \pm d \right) &=& \dfrac{b (2j \pm 1)}{2^t} \pm d.
	\end{eqnarray*}
	And since $1 \leq i \leq 2^{t-1}$ is odd, we have that $2i \pm 1$ is odd as well and $1 \leq 2i \pm 1 \leq 2^t$, as desired.
\end{proof}

\begin{lemma}\label{lemma:above type c predecessors}
	Suppose $b = 5 \cdot 2^{n}$ for $n \geq 2$.  Suppose $(x,0)$ is a difference pair such that $2^n$ does not divide $x$.  If $K^{L} (u, v) = (x,0)$, then $L \leq n+1$.
\end{lemma}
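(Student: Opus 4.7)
The plan is to use Lemma \ref{lemma:type a predecessors} to control the structure of backward chains from $(x,0)$, showing that such a chain stalls after at most $v_2(x)$ reverse steps unless $x$ happens to be a power of $2$, in which case a single type-(iii) jump allows a bounded extension.

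Let $k = v_2(x) \in \{0, 1, \dots, n-1\}$. The case $k = 0$ is immediate from the condensed predecessor list: an odd $x$ yields $L \le 1$, with the chain dying at $(1,0)$ (the only possible predecessor, reachable only when $x = b-1$). For $k \ge 1$ and $L \ge k$, set $W = Z_{L-k}$ (where $Z_j = K^j(u,v)$), so $K^k(W) = (x,0)$. By Lemma \ref{lemma:type a predecessors} with $t = k$, $c = x/2^k$, $d = 0$, the coordinates of $W$ are of the form $(bi \pm x)/2^k$ and $bj/2^k$ with $i,j$ positive odd integers at most $2^k$. Since $b/2^k = 5 \cdot 2^{n-k}$ is even (as $k < n$), $W$ has one odd and one even coordinate.

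From the condensed list, such a mixed-parity pair has no type-(i) or type-(ii) predecessors, and the special case $(b-1,0)$ is excluded because $W$'s second coordinate $bj/2^k$ is positive; hence $W$ can have predecessors only via type (iii), requiring its coordinate sum to equal $b-1$. A direct manipulation of $(b(i+j) \pm x)/2^k = b-1$, combined with $|x|, 2^k < b$ and the even parity of $i+j$, forces $i+j = 2^k$ and $x = 2^k$. Consequently, if $x$ is not a power of $2$, then $L \le k \le n-1$ and we are done.

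If $x = 2^k$, then $W$ has two type-(iii) predecessors: $(5 \cdot 2^{n-k} i, 0)$ and $(5 \cdot 2^{n-k} j + 1, 0)$. The latter is odd, and a $v_2$ check (matching valuations on both sides of the equation $5 \cdot 2^{n-k} j = b - 2$) rules out it being $b-1$, so it is a dead end. The former is of the form $(x'',0)$ with $v_2(x'') = n-k$ and $x'' = 5 \cdot 2^{n-k} i$ not a power of $2$ (since $5i \ge 5$), so the already-established bound applied to $(x'',0)$ limits the chain from $(x'',0)$ to at most $n-k$ steps. Summing, $L \le k + 1 + (n-k) = n+1$. The main obstacle is the case analysis showing that the only way a backward chain can extend past the natural $L \le v_2(x)$ bound is via a single type-(iii) jump forced by $x$ being a power of $2$, and verifying that the jump lands at a pair which is never itself a power of $2$, so the extension cannot cascade further.
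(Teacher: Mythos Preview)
Your argument is correct and follows the paper's route: apply Lemma~\ref{lemma:type a predecessors} for $k=v_2(x)$ steps to reach a mixed-parity pair $W$, force any further predecessor through the type-(iii) rule, and then analyze the two resulting pairs of the form $(\cdot,0)$. Your observation that the sum condition $W_1+W_2=b-1$ actually forces $x=2^k$ is a clean refinement the paper does not isolate---the paper carries a general odd $c$ forward and, in its Case~2, simply reapplies Lemma~\ref{lemma:type a predecessors} to the predecessor $(b-bj/2^m,0)$ and kills the chain at step $n+1$ via divisibility by $5$, which is precisely your recursive step unwound. One small slip: the ``$v_2$ check'' you invoke to rule out $5\cdot 2^{n-k}j+1=b-1$ only pins down $n-k=1$; you still need a mod-$5$ comparison (or, as the paper does in its Case~1, simply allow this possibility and note that its unique predecessor $(1,0)$ is terminal) to finish that branch.
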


\begin{proof}
First assume that $L \geq n+1$ (otherwise, there is nothing to prove), and for each $0 \leq t \leq n+1$ define $K^{L-t} (u,v) = (x_t, y_t)$ [so that $K^{t} (x_t, y_t) = (x,0) = (x_0, y_0)$].

Write $x = 2^m c$ for some integer $m < n$ and some odd number $c \geq 1$.  If $m =0$, then $(x,0)$ has a predecessor only if $x = b-1$. This implies $(x_1, y_1) = (1,0)$, which has no predecessors.  Thus $L \leq 1 < n+1$.

Now assume $m \geq 1$.  Using Lemma \ref{lemma:type a predecessors}, we know that $(x_m, y_m)$ must be of the form $(bi / 2^m \pm c, bj / 2^m)$ or $(bj / 2^m, bi / 2^m \pm c)$, where $i$ and $j$ are positive odd integers at most $2^m$.  Since $i$ and $j$ are odd and since $m < n$, we have that one of the coordinates of $(x_m, y_m)$ is even and the other is odd.  Therefore, since (by assumption) $m < n < L$, we know that $(x_m, y_m)$ has a predecessor $(x_{m+1}, y_{m+1})$, which in turn has another predecessor $(x_{m+2}, y_{m+2})$.  But since $x_m \not \equiv y_m \pmod{2}$ [and $(x_m, y_m)$ has a predecessor], we see from Theorem \ref{theorem:predecessor list} that $x_m + y_m = b-1$ and therefore $(x_{m+1}, y_{m+1})$ is of the form either $(bj / 2^m +1, 0)$ or $(b-bj/2^m, 0)$.

\paragraph*{Case 1:} Consider the case $(x_{m+1}, y_{m+1}) = (bj/2^m + 1, 0)$.  Since this has a predecessor,  we need $bj/2^m + 1 = b-1$ and $(x_{m+2}, y_{m+2}) = (1,0)$.  And since $(1,0)$ has no predecessors, this implies $L \leq m+2 \leq n +1$.

\paragraph*{Case 2:} Now consider $(x_{m+1}, y_{m+1}) = (b - bj/2^m, 0) = (5 \cdot 2^{n-m} (2^{m} - j), 0)$.  By applying Lemma \ref{lemma:type a predecessors}, since $K^{n-m} (x_{n+1}, y_{n+1}) = (x_{m+1}, y_{m+1})$, we have that the coordinates of $(x_{n+1}, y_{n+1})$ are of the form
\[
b i' /2^{n-m} \pm 5 \cdot (2^{m} -j), \qquad \text{and} \qquad b j' /2^{n-m}.
\]
As before, since $0 < m < n$, one of these coordinates is even and the other is odd.  Thus, $(x_{n+1}, y_{n+1})$ has a predecessor only if $x_{n+1} + y_{n+1} = b-1$, but this is not possible since $x_{n+1} \equiv y_{n+1} \equiv b \equiv 0 \pmod{5}$.  Thus, $(x_{n+1}, y_{n+1})$ cannot have any predecessors, which implies $L \leq n+1$.
\end{proof} 

\begin{lemma}\label{lemma:above type b predecessors}
Suppose $b = 5 \cdot 2^{n}$ for $n \geq 2$, and suppose $(x,b-x)$ is a difference pair such that $2^n$ does not divide $x$.  If $K^{L} (u, v) = (x,b-x)$, then $L \leq n-1$.
\end{lemma}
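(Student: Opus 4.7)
The plan is to use Lemma~\ref{lemma:type a predecessors} to pin down the structural form of $(x_m,y_m) := K^{L-m}(u,v)$, where $m$ is defined by $x = 2^m c$ with $c$ odd. Since $m < n$, we have $v_2(b-x) = \min(n, m) = m$ as well, so writing $b - x = 2^m c'$ gives $c'$ odd and $c + c' = b/2^m = 5 \cdot 2^{n-m}$. When $L \geq m$, Lemma~\ref{lemma:type a predecessors} tells us that the two coordinates of $(x_m, y_m)$ are of the form $bi/2^m \pm c$ and $bj/2^m \pm c'$ for positive odd integers $i, j \leq 2^m$. Because $bi/2^m = 5 \cdot 2^{n-m} i$ is even, both coordinates of $(x_m, y_m)$ are odd, and predecessor case~(i) is immediately ruled out.

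I would next exclude case~(iii): expanding $x_m + y_m$ shows it is always even (equal to $5 \cdot 2^{n-m}(i+j\pm 1)$ when the two signs agree, or $5 \cdot 2^{n-m}(i+j) \pm (c-c')$ otherwise), whereas $b-1$ is odd. Thus any further predecessor of $(x_m, y_m)$ must come from case~(ii), requiring $|x_m - y_m| = 2$. In the opposite-sign regime, $x_m - y_m = 5 \cdot 2^{n-m}(i - j + \epsilon)$ is a nonzero multiple of $5 \cdot 2^{n-m} \geq 10$, hence never $\pm 2$. In the agreeing-sign regime, $x_m - y_m = 5 \cdot 2^{n-m}(i - j) + \epsilon(c - c')$, and the bound $|c-c'| < c + c' = 5 \cdot 2^{n-m}$ combined with the triangle inequality forces $i = j$ and $|c - c'| = 2$. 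The latter condition is equivalent to $c = 5 \cdot 2^{n-m-1} \pm 1$, which is odd only when $m \leq n - 2$.

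When case~(ii) does apply, direct computation yields $(x_{m+1}, y_{m+1}) \in \{(b/2 + k_0, b/2 + k_0),\ (b/2 + k_0, b/2 - k_0),\ (b/2 - k_0, b/2 - k_0)\}$ with $k_0 = 5 \cdot 2^{n-m-2}(2i + \epsilon)$. The two diagonal options have no further predecessors (neither is $(0,0)$ nor $(1,1)$, since $k_0 \geq 5$, and $(b/2, b/2)$ has no predecessors for our $b$), so the only way to extend the backward chain is through the anti-diagonal $(b/2 + k_0, b/2 - k_0) = (x', b - x')$, which is again an anti-diagonal pair with $v_2(x') = v_2(k_0) = n - m - 2 =: m_1$. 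I would then recurse, applying the same setup to this level-one pair to obtain $m_1$ further backward case-(i) steps.

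The crucial last step is to show that case~(ii) does \emph{not} recur at level one. Computing the new odd parts gives $c_1 = 5(2^{m+1} + 2i + \epsilon)$ and $c_1' = 5(2^{m+1} - 2i - \epsilon)$, so $c_1 - c_1' = 10(2i + \epsilon)$ is a nonzero multiple of $10$; in particular $|c_1 - c_1'| \neq 2$, killing the $i' = j'$ branch of case~(ii), and the same triangle-inequality estimate (now with $c_1 + c_1' = 5 \cdot 2^{m+2}$) kills the $i' \neq j'$ branch. Case~(iii) at level one is again blocked by parity. Summing the contributions gives $L \leq m + 1 + m_1 = m + 1 + (n - m - 2) = n - 1$, as desired. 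I anticipate the main obstacle to be the sign bookkeeping in all the Lemma~\ref{lemma:type a predecessors} expressions, but once the arithmetic identity $c_1 - c_1' = 10(2i + \epsilon)$ at level one is in hand, the termination of the recursion is essentially forced.
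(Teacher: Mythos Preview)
Your argument is correct and follows essentially the same route as the paper's proof: apply Lemma~\ref{lemma:type a predecessors} to see both coordinates of $(x_m,y_m)$ are odd, pass through one type-(ii) predecessor to another anti-diagonal pair, then apply Lemma~\ref{lemma:type a predecessors} for $n-m-2$ further steps to reach $(x_{n-1},y_{n-1})$ and show it has no predecessor.  One small omission: when both coordinates are odd you must also separately exclude the possibility $(x_m,y_m)=(1,1)$ (and likewise $(x_{n-1},y_{n-1})=(1,1)$), whose predecessor $(b/2,b/2)$ lies outside your case~(ii); the paper handles this explicitly via the forward orbit of $(1,1)$, and at your level~$1$ it follows immediately since both coordinates of $(x_{n-1},y_{n-1})$ are visibly divisible by~$5$.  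Indeed, the paper uses exactly this divisibility-by-$5$ observation (both coordinates of $(x_{n-1},y_{n-1})$ are odd multiples of~$5$) in place of your explicit computation $c_1-c_1'=10(2i+\epsilon)$, which simultaneously dispatches case~(ii), case~(iii), and the $(1,1)$ exception in one stroke.
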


\begin{proof} 
As in the previous proof, assume $L \geq n-1$, and for each $0 \leq t \leq n-1$ define $K^{L-t} (u,v) = (x_t, y_t)$.  Also, write $x = 2^m c$ for $m < n$ and $c$ odd.  By Lemma \ref{lemma:type a predecessors}, $(x_m, y_m)$ must be of the form $(b i /2^m + \varepsilon_1 c, b (j+ \varepsilon_2) /2^m - \varepsilon_2 c)$  or $(b (j+ \varepsilon_2) /2^m - \varepsilon_2 c, b i /2^m + \varepsilon_1 c)$, where $\varepsilon_1, \varepsilon_2 \in \{-1,1\}$ and $i,j$ are positive odd integers each less than $2^m$.  Therefore, $x_m \equiv y_m \equiv 1 \pmod{2}$, so if $(x_{m+1}, y_{m+1})$ exists, then we'd need either $(x_m , y_m) = (1,1)$ or $x_m - y_m = 2$.

\paragraph*{Case 1:} Suppose $(x_m, y_m) = (1,1)$.  But for $1 \leq t \leq n+1$, we have $K^{t} (1,1) = (b- 2^{t-1}, b-3 \cdot 2^{t-1})$, and none of these are of the form $(x, b-x)$.  So it's not possible to have $(x_m, y_m) = (1,1)$ since $m < n$.

\paragraph*{Case 2:} Now suppose $L \geq m+1$ and $x_m - y_m = 2$.  Then we'd need
\[
\dfrac{b}{2^m} (i-j - \varepsilon_2) + c (\varepsilon_1 + \varepsilon_2) = \pm 2.
\]
Looking at this mod $5$, we see that if $L \geq m+1$, then $c (\varepsilon_1 + \varepsilon_2) \equiv \pm 2 \pmod{5}$, and thus $\varepsilon_1 = \varepsilon_2$.  Therefore, we'd have $(x_{m+1}, y_{m+1})$ is of the form
\[
\left( \dfrac{b}{2} \pm \dfrac{b}{2^{m+2}} (i+j + \varepsilon_2) ,  \dfrac{b}{2} \pm \dfrac{b}{2^{m+2}} (i+j + \varepsilon_2) \right).
\]
Thus, $(x_{m+1}, y_{m+1}) = (5 \cdot 2^{n-m-2} p, 5 \cdot 2^{n-m-2} q)$, for odd integers $p,q$.  By Lemma \ref{lemma:type a predecessors}, the coordinates of $(x_{n-1}, y_{n-1})$ must therefore be of the form
	\[
	b i' /2^{n-m-2} \pm 5 p, \qquad \text{and} \qquad b j' /2^{n-m-2}  \pm 5q.
	\]
	And since both of these coordinates are odd and both are divisible by $5$, the pair $(x_{n-1}, y_{n-1})$ cannot have any predecessors, and thus $L \leq n-1$.
\end{proof} 

\subsection*{Proof of Proposition \ref{proposition:predecessor length}}
\begin{proof}
As noted before, the equality case is shown by cases as summarized in Table \ref{table:starting points}.  We will prove this by splitting the argument into four main cases.  Note that the proofs for $L \leq n$ appear in cases III and IV.  For $0 \leq t \leq L$, define $K^{L-t} (u,v) = (x_t, y_t)$ [so that $K^{t} (x_t, y_t) = (x_0, y_0) = (p 2^n, q 2^n)$].

\paragraph*{Case I:} Suppose either $(1,1)$ or $(b-1,0)$ appears as some $(x_t, y_t)$.
\begin{itemize}
\item If $(1,1)$ is some $(x_t, y_t)$, then $(u,v) \in \Big \{ (1,1), (b/2, b/2) \Big \}$ since $(1,1)$ has only $(b/2, b/2)$ as a predecessor, and $(b/2, b/2)$ has none.  For all $2 \leq t \leq n+2$, we have $K^{t} (b/2, b/2) = (b - 2^{t-2}, b- 3 \cdot 2^{t-2})$, and since $K^{n+2}(b/2, b/2) = (4b/5, 2b/5)$ we'd need $(p,q) = (4,2)$ and $L \leq n+2$.
\item Similarly, if $(b-1, 0)$ is some $(x_t, y_t)$, then $(u,v) \in \Big \{(b-1,0), (1,0) \Big \}$.  For all $3 \leq t \leq n+2$ we have $K^{t} (1,0) = (b- 2^{t-2}, b-2^{t-1})$, which would imply $(p,q) = (4, 3)$ and $L \leq n+2.$
\end{itemize}

\noindent Thus, we may assume neither $(1,1)$ nor $(b-1,0)$ appears as any $(x_t, y_t)$.

\paragraph{Case II:} If $p=q \neq 0$, then $(p2^{n}, p 2^{n})$ has no predecessors.  Moreover, the only predecessor of $(0,0)$ is $(0,0)$, and every immediate predecessor of $(3 b/5, b/5)$ has both coordinates divisible by $b/5 = 2^n$.  Thus, we've proven the claim for $(p,q) = (3,1)$ and for $p=q$.

\hrulefill

For each of the next two scenarios, suppose $L \geq n$, and as before, we know that the coordinates of $(x_n, y_n)$ are of the form
\[
\dfrac{b}{2^{n}} i \pm p = 5i \pm p, \qquad \text{and} \qquad \dfrac{b}{2^{n}} j \pm q = 5j \pm q,
\]
for positive odd integers $i, j$ each at most $2^n$.

\paragraph*{Case III:} Suppose $p \equiv q \pmod{2}$.  The case $p \equiv q \equiv 1 \pmod{2}$ was handled in Case I, so we need only consider $p \equiv q \equiv 0 \pmod{2}$.  In this case, we see that both coordinates of $(x_n, y_n)$ are odd, so if $(x_n, y_n)$ has a predecessor, then $x_n = y_n +2$ [since we've already ruled out the possibility that $(x_n, y_n) = (1,1)$].

Thus, if $L > n$, then $x_n - y_n = 2$, implying $\pm p \pm q \equiv \pm 2 \pmod{5}.$  If $(p,q) = (4,0)$, this is impossible, so in that case we'd need $L \leq n$ as desired.  But in general, if $L > n$, then $(x_{n+1}, y_{n+1})$ must be of the form $(b/2 \pm k, b/2 \pm k)$, where $k = (x_{n} + y_{n})/4$.  And since $(x_{n+1},x_{n+1}) \neq (1,1)$, if $(x_{n+1}, y_{n+1})$ has any predecessors we'd need $(x_{n+1}, y_{n+1}) = (b/2 + k, b/2 - k)$.  By assumption, these coordinates are not both divisible by $2^{n}$, so we may apply Lemma \ref{lemma:above type b predecessors} to say that $L - (n+1) \leq n-1$, proving $L \leq 2n$.

\paragraph*{Case IV:} Finally suppose $p \equiv q+1 \pmod{2}$ so that $x_n \equiv y_n +1 \pmod{2}$.  If $L > n$, then $x_n + y_n = b-1$, which implies $5(i+j) \pm p \pm q = 5 \cdot 2^{n} -1$.  Viewing this mod 5, this has no solutions if $(p,q) \in \Big \{(4,1), (3,0) \Big \}$, so in those cases we have $L \leq n$.  In general, we'd have $(x_{n+1}, y_{n+1}) = (x_{n+1}, 0)$.  Since $x_{n+1}$ is not divisible by $2^n$, Lemma \ref{lemma:above type c predecessors} implies $L - (n+1) \leq n+1$ as desired.
\end{proof}

\section{Concluding remarks}\label{section:conclusion}
We have already discussed the open problem of determining $C_b$ for all $b$, but there are several other interesting open questions as well.  Perhaps the most obvious direction for future work would be to extend this analysis to the case of inputs with more than $4$ digits, and \cite{prichett1978terminating} might be a good starting point.

As another example, now that $M_b$ is understood, a natural next question might be to study how many points are $t$ iterations away from the fixed point.  We provide the following charts regarding this distribution for several given bases.  Doing so reveals several intriguing features of these distributions.

\begin{figure}[h]
\begin{center}
	\includegraphics[height=1.5in]{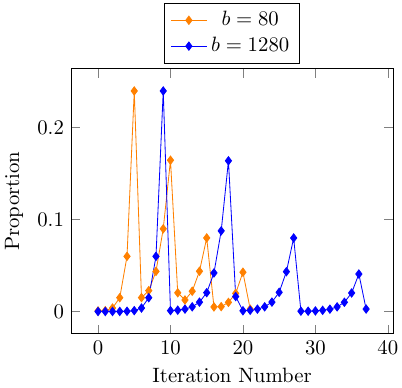}
	\includegraphics[height=1.5in]{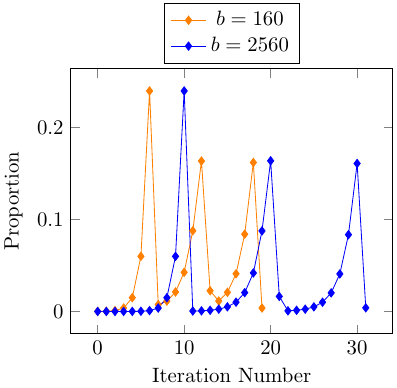}
	\includegraphics[height=1.5in]{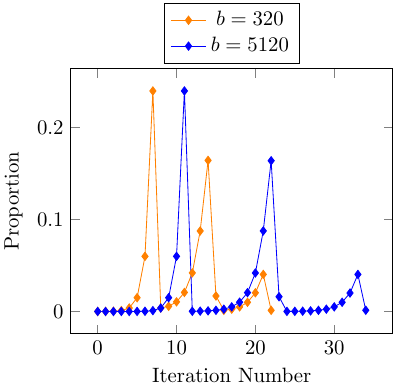}
	\includegraphics[height=1.5in]{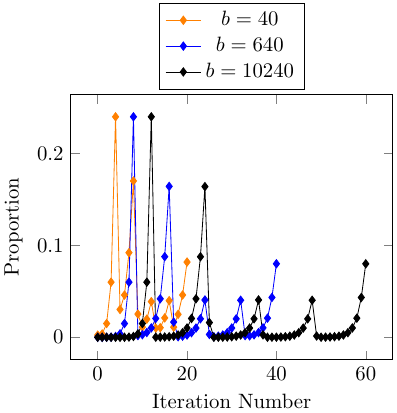}
\end{center}
\caption{Normalized distribution for how many points are $k$ iterations from the fixed point}
\end{figure}
It seems likely that the symmetry in these distributions can be explained in terms of Proposition \ref{proposition:predecessor length} (each spike consisting of roughly $n+1$ points, which grow roughly like $4^t$ until most of these predecessors die out at the same time [as we no longer have any type (a) predecessors]).  We believe this heuristic could perhaps be made rigorous, but we anticipate that getting an exact understanding would be quite delicate.
\bibliographystyle{plain}
\bibliography{references}

\newpage
\section*{Appendix: Tables}
\vfill
\begin{table}[h]
\begin{center}
\begin{tabular}{c||c|c|c|c}
Starting & \multicolumn{4}{c}{After exactly $n+1$ steps, we arrive at}\\
point & $n =4k$ & $n = 4k +1$ & $n = 4k+2$ & $n = 4k + 3$\\
\hline \hline
$2^n$(1,0) & $2^n$(1,0) & $2^n$(4,3) & $2^n$(2,1) & $2^n$(3,2)\\
$2^n$(2,0) & $2^n$(4,3) & $2^n$(2,1) & $2^n$(3,2) & $2^n$(1,0)\\
$2^n$(3,0) & $2^n$(3,2) & $2^n$(1,0) & $2^n$(4,3) & $2^n$(2,1)\\
$2^n$(4,0) & $2^n$(2,1) & $2^n$(3,2) & $2^n$(1,0) & $2^n$(4,3)\\
\hline
$2^n$(4,1) & $2^n$(4,2) & $2^n$(2,0) & $2^n$(4,2) & $2^n$(2,0)\\
$2^n$(1,1) & $2^n$(4,2) & $2^n$(2,0) & $2^n$(4,2) & $2^n$(2,0)\\
$2^n$(4,4) & $2^n$(4,2) & $2^n$(2,0) & $2^n$(4,2) & $2^n$(2,0)\\

\hline
$2^n$(3,2) & $2^n$(2,0) & $2^n$(4,2) & $2^n$(2,0) & $2^n$(4,2)\\
$2^n$(2,2) & $2^n$(2,0) & $2^n$(4,2) & $2^n$(2,0) & $2^n$(4,2)\\
$2^n$(3,3) & $2^n$(2,0) & $2^n$(4,2) & $2^n$(2,0) & $2^n$(4,2)\\
\end{tabular}
\caption{(Derived in \cite{hasse1978determination}) Trajectories of difference pairs where each coordinate is divisible by $2^n = b/5$.  Note the behavior depends on the residue of $n$ mod 4.  For the $5$ pairs not listed above, we have $K(0,0) = (0,0)$ and also $K(2b/5,b/5) = K(3b/5,b/5) = K(4b/5,2b/5) = K(4b/5,3b/5) = (3b/5,b/5)$.}
\label{table:digraph of special points}
\end{center}
\end{table}
\vfill
\begin{table}[h]
\begin{centering}
\begin{tabular}{c|c||c|c|c|c}
Starting & After this & \multicolumn{4}{c}{First pair with both coordinates divisible by $2^n$}\\
here & many steps & $n=4k$ & $n = 4k +1$ & $n = 4k+2$ & $n = 4k + 3$\\
\hline
\hline
(4,1) & $n$ & $2^n (4,1)$ & $2^n (4,1)$ & $2^n (4,1)$ & $2^n (4,1)$\\
(5,1) & $n$ & $2^n (4,0)$ & $2^n (4,0)$ & $2^n (4,0)$ & $2^n (4,0)$\\
(5,2) & $n$ & $2^n (3,0)$ & $2^n (3,0)$ & $2^n (3,0)$ & $2^n (3,0)$\\ \hline
(9,1) & $2n$ & $2^n (2,0)$ & $2^n (4,2)$ & $2^n (2,0)$ & $2^n (4,2)$\\
(7,3) & $2n$ & $2^n (4,2)$ & $2^n (2,0)$ & $2^n (4,2)$ & $2^n (2,0)$\\ \hline
$(b/2,\ 5)$ & $2n+2$ & $2^n$(3,2) & $2^n$(1,0) & $2^n$(4,3) & $2^n$(2,1)\\
$(b/4,\ 5)$ & $2n+2$ & $2^n$(2,1) & $2^n$(3,2) & $2^n$(1,0) & $2^n$(4,3)\\
$(b/8,\ 5)$ & $2n+2$ & $2^n$(4,3) & $2^n$(2,1) & $2^n$(3,2) & $2^n$(1,0)\\
$(b/16,\ 5)$ & $2n+2$ & $2^n$(1,0) & $2^n$(4,3) & $2^n$(2,1) & $2^n$(3,2)
\end{tabular}
\caption{Table showing the equality case in Proposition \ref{proposition:predecessor length}. In each of the last four rows, we need that the first coordinate be stricty larger than $5$, whereas for the rest of the table, we only need $n \geq 2$.}
\label{table:starting points}
\end{centering}
\end{table}
\vfill

\begin{table}
\begin{center}
\begin{tabular}{c||c|c|c|c}
Starting & \multicolumn{4}{c}{Number of steps until we reach the fixed point}\\
point & $n =4k$ & $n = 4k +1$ & $n = 4k+2$ & $n = 4k + 3$\\
\hline \hline
$2^n$(2,1) & 2 & 2 & 2 & 2\\
$2^n$(4,2) & 2 & 2 & 2 & 2\\
$2^n$(4,3) & 2 & 2 & 2 & 2\\
\hline
$2^n$(1,0) & Cycles (n/a)  & $n+3$ & $n+3$ & $2n+4$\\
$2^n$(2,0) & $n+3$ & $n+3$ & Cycles (n/a) & $3n+5$\\
$2^n$(3,0) & $2n+4$ & $2n+4$ & $n+3$ & $n+3$\\
$2^n$(4,0) & $n+3$ & $2n+4$ & $2n+4$ & $n+3$\\
\hline
$2^n$(4,1) & $n+3$ & $2n+4$ & $n+3$ & $4n+6$\\
$2^n$(1,1) & $n+3$ & $2n+4$ & $n+3$ & $4n+6$\\
$2^n$(4,4) & $n+3$ & $2n+4$ & $n+3$ & $4n+6$\\

\hline
$2^n$(3,2) & $2n+4$ & $n+3$ & Cycles (n/a) & $n+3$\\
$2^n$(2,2) & $2n+4$ & $n+3$ & Cycles (n/a) & $n+3$\\
$2^n$(3,3) & $2n+4$ & $n+3$ & Cycles (n/a) & $n+3$\\
\end{tabular}
\caption{Trajectories derived from Table \ref{table:digraph of special points}}
\label{table:special points to ending times}
\end{center}
\end{table}
\vfill

\begin{table}[h]
\begin{center}
\begin{tabular}{c||c|c|c|c}
First pair $2^n (p,q)$ & \multicolumn{4}{c}{Tight bound on total \# steps to reach fixed point}\\
 encountered & $n =4k$ & $n = 4k +1$ & $n = 4k+2$ & $n = 4k + 3$\\
\hline
\hline
$2^n$(2,1) & $2n+4$ & $2n+4$ & $2n+4$ & $2n+4$\\
$2^n$(4,2) & $2n+2$ & $2n+2$ & $2n+2$ & $2n+2$\\
$2^n$(4,3) & $2n+4$ & $2n+4$ & $2n+4$ & $2n+4$\\
\hline
$2^n$(1,0) & [$2n+3$]* & $3n+5$ & $3n+5$ & $4n+6$\\
$2^n$(2,0) & $3n+3$ & $3n+3$ & [$2n+1$]* & $5n+5$\\
$2^n$(3,0) & $3n+4$ & $3n+4$ & $2n+3$ & $2n+3$\\
$2^n$(4,0) & $2n+3$ & $3n+4$ & $3n+4$ & $2n+3$\\
\hline
$2^n$(4,1) & $2n+3$ & $3n+4$ & $2n+3$ & $5n+6$\\
$2^n$(1,1) & $n+3$ & $2n+4$ & $n+3$ & $4n+6$\\
$2^n$(4,4) & $n+3$ & $2n+4$ & $n+3$ & $4n+6$\\

\hline
$2^n$(3,2) & $4n+6$ & $3n+5$ & [$2n+3$]* & $3n+5$\\
$2^n$(2,2) & $2n+4$ & $n+3$ & [2]* & $n+3$\\
$2^n$(3,3) & $2n+4$ & $n+3$ & [2]* & $n+3$\\
\hline

$2^n$(0,0) & [$1$]* & [$1$]* & [$1$]* & [$1$]*\\
$2^n$(3,1) & $1$ & $1$ & $1$ & $1$
\end{tabular}
\caption{The tightness of these bounds requires $n \geq 5$.  Numbers written as $[a]$* indicate that this does not lead to the fixed point associated with $2^n (3,1)$ but instead enters a loop.  In this case, the number indicates the total number of steps until the process first reaches a value $x$ for which $K^{L} (x) = x$ for some $L$ (this is counting the number of steps to reach this value $x$, not merely a value having the same difference pair).}
\label{table:final table numbers}
\end{center}
\end{table}

\end{document}